\newcommand{\p}{\mathbb{P}}
\newcommand{\Z}{\mathbb{Z}}
\newcommand{\Q}{\mathbb{Q}}
\newcommand{\C}{\mathbb{C}}
\newcommand{\R}{\mathbb{R}}
\newcommand{\E}{\mathcal{E}}
\newcommand{\vu}{\varnothing}
\newcommand{\AAA}{\mathcal{A}}
\newcommand{\rk}{\mathop{\rm rk}\nolimits}
\newcommand{\vol}{\mathop{\rm vol}\nolimits}
\newcommand{\Span}[1]{\langle#1\rangle}
\newcommand{\ov}[1]{\overline{#1}}
\newcommand{\GL}{\mathop{\rm GL}\nolimits}
\DeclareMathOperator{\diag}{diag}
\DeclareMathOperator{\Id}{Id}
\DeclareMathOperator{\irr}{irr}
\theoremstyle{plain}
\newtheorem{thm}{Theorem}
\newtheorem{pro}[thm]{Proposition}
\newtheorem{lem}[thm]{Lemma}   
\newtheorem{cor}[thm]{Corollary}
\theoremstyle{definition}
\newtheorem{notaz}[thm]{Notation}
\newtheorem{es}[thm]{Example}
\newtheorem{rmk}[thm]{Remark}
\newcommand{\Mat}[0]{\operatorname{Mat}}
\newcommand{\graffe}[1]{\left\lbrace #1\right\rbrace }
\newcommand{\mc}[1]{\mathcal{#1}}
\newcommand{\gen}[1]{\langle #1\rangle}
\title{Toric varieties from cyclic matrix semigroups}
\author{Francesco Galuppi}
\address{Department of Mathematics and Geoscience, University of Trieste, Via Weiss 2, 34128 Trieste, Italy}
\email{fgaluppi@units.it}
\author{Mima Stanojkovski}
\address{Max Planck Institute for Mathematics in the Sciences\\
	Inselstra\ss{}e 22\\
	04103 Leipzig\\
	Germany}
\email{mima.stanojkovski@mis.mpg.de}
\subjclass[2010]{14M25, 20G20}
\keywords{Toric varieties, matrix semigroups}
\thanks{We are very thankful to Bernd Sturmfels for suggesting this project. We also thank Paul G\"orlach, Lorenzo Venturello and Stefano Marseglia for mathematical discussions, and Alastair Litterick for feedback on an early version of this manuscript.
}
\begin{document}
	\maketitle

\begin{abstract}
We present and expand some existing results on the Zariski closure of cyclic groups and semigroups of matrices. We show that, with the exclusion of isolated points, their irreducible components are toric varieties. Additionally, we demonstrate how every toric variety can be realized as the Zariski closure of a cyclic matrix group. Our paper includes a number of explicit examples and a note on existing computational results.
\end{abstract}

\section*{Introduction}\label{sec:intro}
In mathematics, as well as in many applied sciences, researchers often face the problem of describing a complicated behaviour or a sophisticated model. A common approach is to find \textit{invariants}: roughly speaking, an invariant is a property shared by every point of the model or a function that attains the same value at every state.
%
Invariants appear in a wide range of areas of mathematics, physics, and computer science.
As an example, in the study of dynamical systems, invariants can determine whether the system will reach a given state. 

From an algebraic viewpoint, the most meaningful invariants are polynomial functions. To compute the polynomials that vanish on a given model or set means to compute the closure of such set in the Zariski topology. A common approach in applied algebraic geometry is to take a model coming from biology, statistics or computer science, and give it the structure of an algebraic variety, thus allowing the use of powerful geometric techniques. On the other hand, these classes of models provide examples of families of varieties, whose geometry is interesting in their own right.

In this paper we are interested in the Zariski closures of subsemigroups of $\Mat_n(\C)$: given a semigroup $X\subseteq\Mat_n(\C)$, its Zariski closure is the smallest algebraic subvariety of $\Mat_n(\C)$ containing $X$. 
When $X$ is a closed subset of $\GL_n(\C)$ in the induced topology, one calls $X$ an algebraic group. 
 The study of algebraic groups 
 has a long history and a rich literature (important references are \cite{Humphreys
	,Springer}), but it is also motivated by concrete applications. For instance, groups generated by matrices appear naturally in dynamical systems, where they are often called \textit{automata} or \textit{affine programs} (see for example \cite[Section 1]{HOPW/18}). 

From a computational viewpoint, the problem becomes to find an algorithm that, given a finite set of matrices, returns the Zariski closure of the group or the semigroup that they generate, see for instance \cite[Chapter 4]{deGraaf}. Some of the results in the literature address the existence of an algorithm rather than its implementation or the geometric properties of the closure, see \cite[Theorem 9]{DJK} and \cite[Theorem 16]{HOPW/18}. 

The aim of this article is twofold. On the one hand, we present new proofs of some known results and use geometric techniques to expand and generalize them. On the other hand, we hope that this paper will serve as a clear, accessible reference for researchers working in different areas of mathematics and computer science, as well as 
a friendly entrance point for those who are interested in the subject.

It is natural to start with the simplest situation, i.e.\ the closure of a cyclic group or semigroup. In this case we are able to give a detailed description of the closure: what strikes us as remarkable is that each irreducible component turns out to be a \textit{toric variety}. 
Roughly speaking, a variety is toric if it is the image of a monomial map. A toric variety not only has very pleasant properties - to name a few, it is irreducible, rational and its ideal is generated by binomials - but it can also be associated to a polytope that completely encodes its geometry. This makes toric varieties accessible from a theoretical, combinatorial, and computational point of view.  For instance, there are effective techniques to determine their degrees and their equations. For more information on toric varieties we refer to \cite{Cox}. 
We conclude by pointing out that binomial ideals themselves sit in a very fertile ground between geometry, algebra, and combinatorics 
\cite{ES}. 

\begin{notaz}\label{not}Here we fix the notation we use in this paper.
	\begin{enumerate}
		\item For a subset $X$ of $\Mat_n(\C)$, we denote by $\overline{X}$ the Zariski closure of $X$ in $\Mat_n(\C)$, regarded as $\C^{n^2}$. We write 
	$\irr(\overline{X})$ 
	for the 
	number of irreducible components of $\overline{X}$. 
		\item Given a matrix $M\in\Mat_n(\C)$, we denote by $\mc{E}(M)$ the set of nonzero eigenvalues of $M$. If $\mc{E}(M)\neq\vu$, then we write $G(M)$ for the multiplicative subgroup of $\C^*$ generated by $\mc{E}(M)$.
	
\item For a finitely generated abelian group $G$, i.e.\ a finitely generated $\Z$-module, we write $G_{\mathrm{tor}}$ for the torsion submodule of $G$ and $\rk G$ for the rank of a free complement of $G_{\mathrm{tor}}$ in $G$. For a finite group $G$, we denote by $|G|$ its order. \end{enumerate}\end{notaz}



\begin{thm}\label{thm:semigroups}
	Let $M\in\Mat_n(\C)$ be a nonzero matrix and let $\nu
	$ 
	be the largest size of a Jordan block of $M$ associated to $0$. Write $X=\{M^k\mid k\in\Z_{>0}\}$ for the semigroup of $\Mat_n(\C)$ generated by $M$. 
	Then $\overline{X}$ can be written as a disjoint union
	\[
	\overline{X} = X_0\ \dot{\cup}\ X_1
	\]
of closed sets, where
\begin{enumerate}
	\item 
	$X_0$ is a collection of points of cardinality
	\[
	\begin{cases}
	\nu & \textup{if } \E(M)=\vu, \\
	\max\{0,\nu-1\} & \textup{otherwise}.
	\end{cases}
	\]
	\item either $X_1=\mc{E}(M)=\vu $ or 
	 $X_1$ is a union of $|G(M)_{\mathrm{tor}}|$  toric varieties of dimension 
	\[
	\dim X_1=\begin{cases}
	\rk G(M) & \textup{if $M^{\max\graffe{1,\nu}}$ is diagonalizable}, \\
	\rk G(M)+1 & \textup{otherwise}.
	\end{cases}
	\]
\end{enumerate}
\end{thm}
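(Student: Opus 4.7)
The plan is to reduce to Jordan normal form and analyze powers of $M$ on the generalized $0$-eigenspace and its complement separately. After conjugating by an element of $\GL_n(\C)$, a Zariski automorphism of $\Mat_n(\C)$, I assume $M$ is in Jordan normal form and decompose $\C^n=V_0\oplus V_+$, with $V_0$ the generalized $0$-eigenspace and $V_+$ the sum of generalized eigenspaces at nonzero eigenvalues. Then $M$ acts nilpotently on $V_0$ with index $\nu$ and invertibly on $V_+$, leaving $\mc{E}(M)$ and $G(M)$ unchanged. For $1\le k<\nu$, the restriction $M^k|_{V_0}$ is nonzero (the largest $0$-Jordan block contributes rank $\nu-k$) and strictly decreasing in rank, so $M^1,\ldots,M^{\nu-1}$ are pairwise distinct; for $k\ge\max\{1,\nu\}$, $M^k|_{V_0}$ vanishes. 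Because $A\mapsto A|_{V_0}$ is a linear, hence Zariski continuous, projection, the early powers form a finite closed set disjoint from $\overline{\{M^k:k\ge\max\{1,\nu\}\}}$, furnishing $X_0$ of the stated cardinality ($\nu$ when $\mc{E}(M)=\vu$, counting $M^\nu=0$; $\nu-1$ otherwise).

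To describe $X_1=\overline{\{M^k:k\ge\max\{1,\nu\}\}}$ when $\mc{E}(M)\ne\vu$, note that each such $M^k$ vanishes on $V_0$ and is therefore determined by its restriction $M^k|_{V_+}$, which is invertible; multiplication by $(M|_{V_+})^{\max\{1,\nu\}-1}$ is a linear automorphism of $\operatorname{End}(V_+)$, reducing to the invertible case, where $\{M^k:k\ge1\}$ and $\{M^k:k\in\Z\}$ have the same Zariski closure. Applying the multiplicative Jordan--Chevalley decomposition $M=SU$ with $S$ semisimple, $U$ unipotent, and $SU=US$, I get $M^k=S^kU^k$. The closure $H_S$ of $\{S^k:k\in\Z\}$ in the diagonal torus of $\operatorname{End}(V_+)$ is a closed algebraic subgroup, and the duality between closed subgroups of tori and quotients of their character lattices identifies its character lattice with $G(M)$; hence $\dim H_S=\rk G(M)$, and $H_S$ has $|G(M)_{\mathrm{tor}}|$ connected components, each a coset of $H_S^\circ$.

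Since $U-I$ is nilpotent, $U^t:=\sum_{i\ge 0}\binom{t}{i}(U-I)^i$ extends $k\mapsto U^k$ to a polynomial morphism $\C\to\GL(V_+)$, and multiplication yields $\Phi\colon\C\times H_S\to\operatorname{End}(V_+)$, $(t,s)\mapsto sU^t$. The Zariski closure of $\{(k,S^k):k\in\Z\}$ in $\C\times H_S$ is a closed subgroup surjecting onto both factors; because $\Hom_{\text{alg-grp}}(H_S,(\C,+))=0$ (the identity component being a torus), no proper graph subgroup can exist, so the closure equals $\C\times H_S$ and $X_1=\overline{\Phi(\C\times H_S)}$. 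If $U=I$ (equivalently, $M^{\max\{1,\nu\}}$ is diagonalizable), $\Phi$ factors through the second projection and $X_1=\overline{H_S}$ is the closure in $\operatorname{End}(V_+)$ of the $|G(M)_{\mathrm{tor}}|$ cosets of $H_S^\circ$, each a toric variety of dimension $\rk G(M)$. If $U\ne I$, the equation $sU^t=s'U^{t'}$ forces $s'^{-1}s=U^{t-t'}\in H_S$; since $H_S$ consists of diagonal matrices in the Jordan basis while $U^{t-t'}$ has nonzero off-diagonal entries unless $t=t'$, the map $\Phi$ is injective, so $X_1$ has dimension $\rk G(M)+1$ with $|G(M)_{\mathrm{tor}}|$ irreducible components. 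Writing the entries of $sU^t$ in the Jordan basis as products of Laurent monomials in coordinates on $H_S^\circ$ and polynomials in $t$ (which become pure monomials after the linear substitution $\binom{t}{i}\mapsto t^i$) exhibits each component of $X_1$ as the image of a monomial map.

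The main obstacle I expect is verifying that distinct connected components of $\C\times H_S$ give rise to distinct Zariski closures in $\operatorname{End}(V_+)$ after $\Phi$, not merely to disjoint images. The separation follows from exhibiting a character $\chi$ of $H_S$ trivial on $H_S^\circ$ but taking different values on distinct cosets, which via the diagonal of $sU^t$ extends to a regular function separating the closures. The bookkeeping required to turn this into a clean statement, together with confirming that the resulting ideal of each component is generated by binomials (so that it is a toric variety in the strict sense), is the step I expect to require the most care.
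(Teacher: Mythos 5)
Your proposal is correct and follows the same overall architecture as the paper's proof: the same three-way case split on $\mc{E}(M)$ and invertibility, the same identification of $X_0$ as a finite closed set separated from the rest by the vanishing of the $V_0$-block, and the same reduction of $X_1$ to the invertible group case via the fact that a subsemigroup of $\GL_n(\C)$ and the group it generates have the same closure (the paper's Proposition~\ref{pro:semi vs gp}). Where you genuinely diverge is in the treatment of the invertible case. The paper invokes the structure theorem for commutative algebraic groups, $\overline{X}\cong X_s\times X_u$ (Humphreys, Theorem~15.5, used in the proof of Proposition~\ref{pro:sum dims}), and then computes the two factors separately: $X_s$ via an explicit binomial-ideal argument (Propositions~\ref{pro:diagonal cyclic torsionfree} and~\ref{pro:diagonal cyclic}), and $X_u$ as a rational normal curve via a direct change of coordinates (Lemma~\ref{lem:unipotent}). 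You instead parametrize $X_1$ directly by $\Phi\colon\C\times H_S\to\Mat_n(\C)$, $(t,s)\mapsto sU^t$, and prove that the closure of the graph $\{(k,S^k)\}$ is all of $\C\times H_S$ by ruling out proper closed subgroups surjecting onto both factors, using that a diagonalizable group admits no nontrivial algebraic homomorphism to the additive group $(\C,+)$; this is a more elementary replacement for the appeal to Humphreys. Your identification of the character lattice of $H_S$ with $G(M)$ by torus duality is also cleaner than the paper's explicit matrix manipulation and amounts to the same content as \cite[Prop.~3.9.7]{deGraaf}, which the paper cites. Two small caveats: first, your framing of $X_0$ as ``the early powers $\{M^k:1\le k<\max\{1,\nu\}\}$'' is off by one when $\mc{E}(M)=\vu$, where $M^\nu=0$ must be absorbed into $X_0$ (as you note, but it is cleanest to treat the nilpotent case separately, as the paper does); second, the final verification that each component is genuinely toric and that the $|G(M)_{\mathrm{tor}}|$ component closures are pairwise distinct is, as you anticipate, the step that needs the most bookkeeping, and the paper smooths this over by using Lemma~\ref{lem:springer}(\ref{item:density}) to transfer component-counting from $\GL_n(\C)$ to $\Mat_n(\C)$ -- a trick you could profitably borrow.
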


Observe that Theorem \ref{thm:semigroups} applies not only to semigroups: as we will prove in Proposition \ref{pro:semi vs gp}, when $M$ is invertible the same statement is true for the group generated by $M$. In this case, the toric varieties described in point (2) are the irreducible components of $X_1$, and their intersections with $\GL_n(\C)$ are the connected components of the group $\ov{\Span{M}}\cap\GL_n(\C)$.  
When $M$ is invertible and diagonalizable, Theorem \ref{thm:semigroups} agrees with \cite[Proposition 3.9.7]{deGraaf}. Let us also point out that, thanks to \cite[Proposition 11]{hybrid}, Theorem \ref{thm:semigroups} describes not only the structure of the closure of affine programs, which are discrete dynamical systems, but also the structure of a much larger class of dynamical systems, called hybrid automata.

\begin{es}\label{ex:joel}
	Let
	\[
	M=\begin{pmatrix}
	10 & -8 \\
	6 & -4
	\end{pmatrix}\in\GL_2(\C)
	\]
	and let $X$ be the semigroup of $\GL_2(\C)$ generated by $M$. If we set
	\[D=\begin{pmatrix}2 & 0 \\
	0 & 4\end{pmatrix}\mbox{ and } P=\begin{pmatrix}1 & 4 \\1 & 3\end{pmatrix},
	\]
	then $M=PDP^{-1}$.
	It follows that $M$ is diagonalizable, $\mc{E}(M)=\{2,4\}$, and 
	$G(M)=\gen{2,4}=\gen{2}\cong \Z$. Theorem \ref{thm:semigroups} yields that $\overline{X}$ is an irreducible toric curve in $\C^4$. This example was presented in \cite[Section 2]{HOPW/18} in the setting of dynamical systems. 
	Here we determine explicit equations describing the closure of $X$. Let $Y$ be the semigroup generated by $D$. Denoting the coordinates of $\C^4$ by
	\[\begin{pmatrix}
	x & w\\
	z & y
	\end{pmatrix},\]
	we see that the three polynomials $f=z$, $g=w$, and $h=x^2-y$ generate the ideal of $\overline{Y}$. Let $\phi:\C^4\to\C^4$ be the linear automorphism defined by
	\begin{align*}
	\begin{pmatrix}
	x & w\\
	z & y
	\end{pmatrix}&\longmapsto P^{-1}\begin{pmatrix}
	x & w\\
	z & y
	\end{pmatrix}P=\begin{pmatrix}
	-3x+4y+4z-3w & -12x+12y+16z-9w \\
	x-y-z+w & 4x-3y-4z+3w
	\end{pmatrix}.
	\end{align*}
	Then $\phi(\overline{X})=\overline{Y}$, hence $f\circ\phi$, $g\circ\phi$, and $h\circ\phi$ generate the ideal of $\ov{X}$. With this choice of coordinates, the map $\phi$ is represented by the matrix
	\[
	\begin{pmatrix}
	-3 & 4&4 &-3\\
	4 & -3 & 4 & 3\\
	1 & -1 & -1 & 1\\
	-12 & 12 & 16 & -9
	\end{pmatrix},
	\]
	therefore $\overline{X}$ is described by the equations
	\[\begin{cases}
	x + w = y +z,\\
	12x + 9w = 12y+16z,\\	
	(-3x+4y+4z-3w)^2=4x-3y-4z+3w
	.
	\end{cases}\]
	These 
	provide the tightest polynomial conditions that a point has to satisfy in order to belong to $X$.
\end{es}


\section{Preliminaries}\label{sec:basic}

In the present paper we are concerned with Zariski closures of subsets of $\Mat_n(\C)$. However, when the subsets in play consist of invertible matrices, the closures are classically taken in $\GL_n(\C)$. Here we show that, when dealing with commutative subgroups, some important geometric properties do not depend on this choice. 

\begin{lem}\label{lem:conjugation}
Let $X$ be a subgroup of $\GL_n(\C)$ and let $g\in\GL_n(\C)$. Then $\overline{gXg^{-1}}=g\overline{X}g^{-1}$ and $\overline{X}$ is isomorphic to $\overline{gXg^{-1}}$ as algebraic subvarieties of $\Mat_n(\C)$.
\end{lem}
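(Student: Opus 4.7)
My strategy is to exhibit conjugation by $g$ as a regular automorphism of the ambient space $\Mat_n(\C)$, and then to deduce both assertions of the lemma simultaneously. The crucial observation is that, although the closure is taken in $\Mat_n(\C)$ rather than in $\GL_n(\C)$, conjugation by $g$ makes perfect sense as a self-map of all of $\Mat_n(\C)$, not just of $\GL_n(\C)$.

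First I would introduce the map $\psi_g \colon \Mat_n(\C) \to \Mat_n(\C)$ defined by $\psi_g(M) = gMg^{-1}$. Since each entry of $gMg^{-1}$ is a $\C$-linear combination of the entries of $M$, with coefficients given by products of entries of $g$ and $g^{-1}$, the map $\psi_g$ is linear, and hence a morphism of affine varieties. Its set-theoretic inverse is $\psi_{g^{-1}}$, which is also linear, so $\psi_g$ is an automorphism of $\Mat_n(\C)$ in the category of algebraic varieties and, in particular, a homeomorphism in the Zariski topology.

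Next I would use the elementary fact that homeomorphisms commute with topological closure: applying $\psi_g$ to $\overline{X}$ gives $\psi_g(\overline{X}) = \overline{\psi_g(X)}$, which is precisely the equality $g\overline{X}g^{-1} = \overline{gXg^{-1}}$. Restricting $\psi_g$ to $\overline{X}$ then yields a morphism $\overline{X} \to \overline{gXg^{-1}}$ whose inverse is the restriction of $\psi_{g^{-1}}$, and this realizes the desired isomorphism of the two subvarieties of $\Mat_n(\C)$ (in the strong sense that it is induced by an ambient automorphism).

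There is no serious obstacle here: the argument is purely formal once one recognises that conjugation extends to a linear automorphism of the entire ambient space $\Mat_n(\C)$. The group structure on $X$ is in fact inessential; what is actually needed is only the inclusion $X \subseteq \Mat_n(\C)$ and the invertibility of $g$.
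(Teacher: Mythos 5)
Your proof is correct and follows essentially the same route as the paper's: both introduce conjugation by $g$ as a linear automorphism of $\Mat_n(\C)$, note it is a homeomorphism, deduce $\overline{gXg^{-1}} = g\overline{X}g^{-1}$ from the fact that homeomorphisms commute with closure, and obtain the isomorphism of the two closures by restriction. Your added remark that the group structure on $X$ is irrelevant is a correct and mildly useful observation, but does not change the argument.
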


\begin{proof}
Let $\phi:\Mat_n(\C)\rightarrow\Mat_n(\C)$ denote conjugation under $g$, which is a homeomorphism restricting to an automorphism of the algebraic group $\GL_n(\C)$. As a consequence, $\overline{X}$ and $\phi(\overline{X})=g\overline{X}g^{-1}$ are isomorphic varieties. The morphism $\phi$ being a homeomorphism, we get $\phi(\overline{X})=\overline{\phi(X)}$.
\end{proof}

\begin{lem}\label{lem:springer}
Let $X$ be a commutative subgroup of $\GL_n(\C)$. Then the following hold:
\begin{enumerate}
 \item There exists $g\in\GL_n(\C)$ such that $gXg^{-1}$ consists of upper triangular matrices.
 \item \label{item:density}The Zariski closure $\overline{X}\cap\GL_n(\C)$ of $X$ in $\GL_n(\C)$ is dense in $\overline{X}$.
\end{enumerate}
\begin{proof} For (1), see \cite[Lemma 2.4.2]{Springer}. To see (2), observe that $X\subseteq \overline{X}\cap \GL_n(\C) \subseteq \overline{X}$,
so taking the closures yields the claim.
\end{proof}
\end{lem}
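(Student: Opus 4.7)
The plan is to handle the two parts separately, since they rely on quite different ideas.

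For part (1), I would follow the standard route to simultaneous triangularization of a commuting family of matrices. The key input is that any collection of pairwise commuting operators on $\C^n$ admits a common eigenvector: fix any $M \in X$; the eigenspaces of $M$ are stable under every element of $X$ (since all elements commute with $M$), so one can restrict the rest of $X$ to a chosen eigenspace, use that we work over $\C$ to guarantee the existence of eigenvalues, and iterate until a simultaneous eigenvector $v_1$ is found. Completing $v_1$ to a basis and passing to the quotient $\C^n/\gen{v_1}$ one obtains an induced commuting subgroup of $\GL_{n-1}(\C)$, and induction on $n$ delivers a basis in which every element of $X$ is upper triangular. The change of basis matrix is the required $g$. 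Alternatively, one simply invokes \cite[Lemma 2.4.2]{Springer} verbatim, as the authors do.

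For part (2), the argument is purely formal and uses nothing about commutativity. Since $X \subseteq \GL_n(\C)$ and $X \subseteq \overline{X}$, the chain of inclusions
\[
X \subseteq \overline{X} \cap \GL_n(\C) \subseteq \overline{X}
\]
holds. Taking the Zariski closure in $\Mat_n(\C)$ throughout, and using that the closure of $X$ is $\overline{X}$, yields
\[
\overline{X} \subseteq \overline{\overline{X} \cap \GL_n(\C)} \subseteq \overline{X},
\]
so all three sets coincide, which is exactly the density statement. A small point of bookkeeping is to notice that, because $\GL_n(\C)$ is open in $\Mat_n(\C)$, the Zariski closure of $X$ taken inside $\GL_n(\C)$ really does equal $\overline{X} \cap \GL_n(\C)$, matching the notation used in the statement.

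Neither step poses a genuine obstacle: the first is a classical piece of linear algebra (or an outright citation), and the second is a two-line topological manipulation. If there is any subtle part, it is only to remember that the density being asserted is measured by closures in the ambient affine space $\Mat_n(\C) \cong \C^{n^2}$, not in $\GL_n(\C)$ itself.
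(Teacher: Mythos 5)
Your proof is correct. Part (2) is the same two-line sandwich argument as the paper's: $X\subseteq\overline{X}\cap\GL_n(\C)\subseteq\overline{X}$, take Zariski closures in $\Mat_n(\C)$, and conclude. For part (1), the paper simply cites \cite[Lemma 2.4.2]{Springer}, whereas you additionally sketch the classical elementary proof by simultaneous triangularization: find a common eigenvector by exploiting stability of eigenspaces under commuting operators, pass to the quotient, and induct on $n$. Both routes are legitimate; yours has the advantage of being self-contained, and it does apply to a possibly infinite commuting family (the relevant dichotomy at each stage is whether every restricted operator is scalar or some operator has a proper eigenspace, not the cardinality of $X$), while the citation is shorter and is what the authors opt for. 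One minor remark: the observation that $\GL_n(\C)$ is open in $\Mat_n(\C)$ is not actually needed to identify $\overline{X}\cap\GL_n(\C)$ with the closure of $X$ in $\GL_n(\C)$ --- that identification holds for any subspace, open or not, by the general description of the subspace topology.
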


\begin{rmk}\label{rmk:dim-irr}
We will assume in the rest of the paper that any commutative subgroup of $\GL_n(\C)$ is already given in upper triangular form
. Moreover, thanks to Lemma \ref{lem:springer}(\ref{item:density}), dimension and number of irreducible components of $\overline{X}$ are the same, regardless of whether we take them in $\Mat_n(\C)$ or $\GL_n(\C)$. 
\end{rmk}


Besides the choice of the ambient space for the closure, i.e.\ $\Mat_n(\C)$ or $\GL_n(\C)$, there are other variations of the problem in the literature. As we pointed out in the introduction, given finitely many matrices, it is interesting to consider both the group and the semigroup they generate. The following result, already proven in \cite[Lemma 2]{DJK} for orthogonal matrices, shows that, for our purposes, it is equivalent to deal with groups or semigroups.

\begin{pro}\label{pro:semi vs gp}
Let $Y \subseteq \GL_n(\C)$ 
be a subsemigroup and let $X$ denote the smallest subgroup of $\GL_n(\C)$ containing $Y$, i.e.\
\[
X = \bigcap\graffe{H\leq \GL_n(\C) \mid Y\subseteq H}.
\]
Then the Zariski closures $\overline{X}$ and $\overline{Y}$ are the same.
\end{pro}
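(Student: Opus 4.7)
The plan is to prove the two inclusions separately. The inclusion $\overline{Y} \subseteq \overline{X}$ is immediate from $Y \subseteq X$. For the reverse inclusion, the strategy is to show that $\overline{Y}$ is already a subgroup of $\GL_n(\C)$; since $X$ is the smallest such subgroup containing $Y$, this forces $X \subseteq \overline{Y}$, and hence $\overline{X} \subseteq \overline{Y}$.

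First, I would verify that $\overline{Y}$ is closed under multiplication. Fix $y \in Y$. The set $\{N \in \Mat_n(\C) : yN \in \overline{Y}\}$ is closed, being the preimage of $\overline{Y}$ under the regular map $N \mapsto yN$, and it contains $Y$ by the semigroup property; hence it contains $\overline{Y}$. With this in hand, fix $b \in \overline{Y}$: the set $\{a \in \Mat_n(\C) : ab \in \overline{Y}\}$ is closed and, by the previous step, contains $Y$, hence contains $\overline{Y}$. So $\overline{Y}\cdot \overline{Y} \subseteq \overline{Y}$.

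Next, I would show $y^{-1} \in \overline{Y}$ for every $y \in Y$ via a descending chain argument. Since $y \in \GL_n(\C)$, left multiplication by $y$ is a linear automorphism of $\Mat_n(\C)$, so each $y^k \overline{Y}$ is closed, and the previous step gives $y^k \overline{Y} \subseteq \overline{Y}$. The chain $\overline{Y} \supseteq y\overline{Y} \supseteq y^2 \overline{Y} \supseteq \cdots$ consists of closed subsets of the noetherian topological space $\overline{Y}$, so it stabilizes: $y^N \overline{Y} = y^{N+1}\overline{Y}$ for some $N$, and applying the homeomorphism $N \mapsto y^{-N} N$ yields $\overline{Y} = y\overline{Y}$. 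From $y \in \overline{Y} = y\overline{Y}$ we obtain $h \in \overline{Y}$ with $y = yh$, which forces $h = \Id$ and therefore $\Id \in \overline{Y}$. Applying $\overline{Y} = y\overline{Y}$ once more, $\Id \in y\overline{Y}$ produces an element $h' \in \overline{Y}$ with $yh' = \Id$, that is, $h' = y^{-1}$.

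Combining both steps, $\overline{Y}$ is a submonoid of $\Mat_n(\C)$ containing the inverse of every $y \in Y$, hence a subgroup of $\GL_n(\C)$. Since every element of $X$ is a product of elements of $Y$ and their inverses, we conclude $X \subseteq \overline{Y}$ and thus $\overline{X} \subseteq \overline{Y}$. The point requiring the most care is the descending chain step: one must verify that the sets $y^k\overline{Y}$ are genuinely closed subsets of $\overline{Y}$ (and not merely of $\Mat_n(\C)$), which is what allows the noetherian property of $\overline{Y}$ to force stabilization. The remaining work is essentially repeated use of the fact that preimages of closed sets under regular maps are closed.
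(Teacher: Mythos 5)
Your core argument is correct and rests on the same key idea as the paper's proof: a Noetherian descending chain $\overline{Y} \supseteq y\overline{Y} \supseteq y^2\overline{Y} \supseteq \cdots$ stabilizes, forcing $\overline{Y} = y\overline{Y}$ and hence $\Id, y^{-1} \in \overline{Y}$. You also spell out the semigroup-closure step $\overline{Y}\cdot\overline{Y}\subseteq\overline{Y}$ (two applications of ``preimage of a closed set is closed''), which the paper leaves implicit. The one thing you should correct is the assertion that $\overline{Y}$ is a subgroup of $\GL_n(\C)$: since $\overline{Y}$ is taken in $\Mat_n(\C)$, it typically contains singular matrices and hence is not contained in $\GL_n(\C)$ at all --- for instance $Y=\{2^k \mid k\geq 1\}\subseteq\GL_1(\C)$ has $\overline{Y}=\C\ni 0$. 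This makes the opening sentence ``the strategy is to show that $\overline{Y}$ is already a subgroup of $\GL_n(\C)$; since $X$ is the smallest such subgroup, $X\subseteq\overline{Y}$'' unusable as stated. Fortunately your final sentence gives the correct argument and does not need $\overline{Y}$ to be a group: $\overline{Y}$ is multiplicatively closed and contains $Y\cup Y^{-1}$, and every element of $X$ is a finite product of elements of $Y\cup Y^{-1}$, so $X\subseteq\overline{Y}$. Alternatively you could phrase it as the paper does, by working with $U_Y=\overline{Y}\cap\GL_n(\C)$, which genuinely is a closed subgroup of $\GL_n(\C)$ containing $Y$ and therefore contains $X$; passing back to closures in $\Mat_n(\C)$ then finishes the argument.
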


\begin{proof}
Let $U_X=\overline{X}\cap\GL_n(\C)$ and $U_Y=\overline{Y}\cap \GL_n(\C)$ denote respectively the closures of $X$ and $Y$ in $\GL_n(\C)$. 
We claim that $U_Y$ is a subgroup of $U_X$.
Indeed, if this were not the case, there would exist an element $g\in U_Y$ yielding an infinite chain $U_Y\supsetneq gU_Y \supsetneq g^2U_Y\supsetneq \ldots$  and thus contradicting Noetherianity of the Zariski closure.
Since $U_X$ is the smallest closed subgroup of $\GL_n(\C)$ containing $X$, the equality $U_X=U_Y$ holds. We now observe that $X\subseteq U_X\subseteq \overline{X}$ and so $\ov{X}=\overline{U_X}$. An analogous statement holds for $U_Y$ and so we conclude that $\overline{X}=\overline{Y}$.
\end{proof}

\section{Zariski closure of a cyclic group}\label{sec:closure of subgroups}


In the present section, we will prove Theorem \ref{thm:semigroups} for invertible matrices. We conveniently recall the statement in this case.

\begin{thm}
\label{thm:cyclic-general}
	Let $M\in\GL_n(\C)$ and let $X$ be the subgroup of $\GL_n(\C)$ generated by $M$. Then $\irr(\overline{X})=|G(M)_{\mathrm{tor}}|$ and the irreducible components are pairwise isomorphic toric varieties of dimension 
	\[\dim\overline{X}=\begin{cases}
	\rk G(M) & \textup{if $M$ is diagonalizable}, \\
	\rk G(M)+1 & \textup{otherwise} .
	\end{cases}  
	\]
\end{thm}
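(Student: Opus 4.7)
The plan is to use the Jordan--Chevalley decomposition $M=M_sM_u$ in $\GL_n(\C)$ (with $M_s$ semisimple, $M_u$ unipotent, commuting) and analyze the closures of $\langle M_s\rangle$ and $\langle M_u\rangle$ separately. By Lemma \ref{lem:conjugation} we may assume $M$ is in Jordan form; by Remark \ref{rmk:dim-irr}, dimension and irreducible component count are preserved whether we close in $\GL_n(\C)$ or in $\Mat_n(\C)$. For the algebraic-group arguments it is convenient to set $H:=\overline{X}\cap\GL_n(\C)$, a commutative closed subgroup of $\GL_n(\C)$; the Jordan--Chevalley theorem for algebraic groups (see e.g.\ \cite{Humphreys}) ensures $M_s,M_u\in H$.

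I would next show $H=H_s\cdot H_u$, where $H_s:=\overline{\langle M_s\rangle}\cap\GL_n(\C)$ and $H_u:=\overline{\langle M_u\rangle}\cap\GL_n(\C)$. The inclusion $\supseteq$ follows from $H_s,H_u\subseteq H$ and closure under multiplication. For $\subseteq$, note $M^k=M_s^kM_u^k\in H_s\cdot H_u$, and $H_s\cdot H_u$ is itself a closed subgroup of $\GL_n(\C)$, being the image of the multiplication morphism from the algebraic group $H_s\times H_u$. Since semisimple and unipotent matrices coincide only at $I$, the product is internally direct: $H\cong H_s\times H_u$.

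Each factor can then be described. After diagonalizing $M_s=\diag(\lambda_1,\ldots,\lambda_n)$, the subgroup $H_s$ is the algebraic subgroup of the diagonal torus $(\C^*)^n$ cut out by the character lattice $L=\{(c_i)\in\Z^n:\prod\lambda_i^{c_i}=1\}$; since $\Z^n/L\cong G(M)\cong\Z^r\oplus G(M)_{\mathrm{tor}}$, Pontryagin duality gives $H_s\cong (\C^*)^r\times G(M)_{\mathrm{tor}}$, so $H_s$ has $|G(M)_{\mathrm{tor}}|$ connected components, each an algebraic torus of dimension $r=\rk G(M)$. As for $H_u$: if $M_u=I$ (the diagonalizable case) then $H_u=\{I\}$; otherwise $M_u=I+N$ with $N\neq 0$ nilpotent, and the integer powers $M_u^k=\sum_j\binom{k}{j}N^j$ are Zariski dense in the one-parameter unipotent subgroup $\{\sum_j\binom{\tau}{j}N^j:\tau\in\C\}\cong\mathbb{G}_a$, since a nonzero polynomial cannot vanish on all of $\Z$.

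Combining the factors, $H$ has exactly $|G(M)_{\mathrm{tor}}|$ connected components, all pairwise isomorphic via translation by coset representatives and each of dimension $r+\epsilon$ with $\epsilon\in\{0,1\}$ indicating non-diagonalizability. By Remark \ref{rmk:dim-irr} the same count and dimension transfer to the closures in $\Mat_n(\C)$, yielding $\irr(\overline{X})=|G(M)_{\mathrm{tor}}|$ and the claimed dimension formula. The remaining point is toricity: each component is, up to translation by an invertible linear automorphism of $\Mat_n(\C)$, the closure of the image of $\mathbb{G}_m^r\times\mathbb{G}_a^\epsilon$ under the explicit Jordan-form embedding, whose matrix entries are products of monomials in the torus parameters with polynomials in the $\mathbb{G}_a$-parameter $\tau$. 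I expect the main technical hurdle to be verifying this toric structure cleanly; the key observation is that the binomial polynomials $\binom{\tau}{j}$ span the same linear space as the monomials $\tau^j$, so a suitable linear change of coordinates on $\Mat_n(\C)$ realizes each component as the image of an honest monomial map out of $\mathbb{G}_m^{r+\epsilon}$, hence as a toric variety in the sense used in the paper.
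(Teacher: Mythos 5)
Your proposal follows the same high-level strategy as the paper's proof: pass to Jordan form, use the Jordan--Chevalley decomposition $M=M_sM_u$, analyze the semisimple and unipotent factors separately, and combine them via a direct product. The individual steps, however, use different tools. For the diagonalizable part, the paper constructs an explicit exponent matrix $A$ from a $\Z$-basis of $G(M)$ and proves $I_X=I_{Y_\AAA}$ directly, invoking \cite[Proposition 5]{KZ} to know a priori that $I_X$ is generated by coefficient-$\pm1$ binomials; you instead appeal to the anti-equivalence between diagonalizable algebraic groups and finitely generated abelian groups, reading off $H_s\cong(\C^*)^r\times G(M)_{\mathrm{tor}}$ from $\Z^n/L\cong G(M)$. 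The paper's route immediately produces the toric variety as a subvariety of $\C^n$, whereas yours gives the abstract group structure and defers the toricity of the embedding to a separate (and, as you flag, slightly delicate) argument. For the product decomposition, the paper cites \cite[Theorem 15.5]{Humphreys} for commutative algebraic groups, whereas you give a self-contained argument via closedness of images of algebraic group homomorphisms and triviality of the semisimple--unipotent intersection; both are fine, and note that the multiplication $H_s\times H_u\to\GL_n(\C)$ is a homomorphism because $H_s$ and $H_u$ commute elementwise (by density of $\langle M_s\rangle\times\langle M_u\rangle$). Your final linear change of coordinates converting $\binom{\tau}{j}$ to $\tau^j$ is precisely the content of the paper's Lemma \ref{lem:unipotent}; for the mixed components the point to check is that this change of basis acts Jordan-block by Jordan-block and commutes with the fixed monomial factor $\mu^{\alpha_l}$ of each block, which holds since that factor is a scalar within the block. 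With those details supplied, your argument is correct and in places more explicit than the paper's brief concluding remark on toricity.
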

As we will be dealing with cyclic subgroups of the form $X=\gen{M}$ with $M\in\GL_n(\C)$, throughout the present section we will make implicit use of Lemma \ref{lem:conjugation} by assuming that the matrix $M$ is given in Jordan normal form. 


We remark that the content of Theorem \ref{thm:cyclic-general} is not essentially new. Indeed, in the case of invertible matrices, one ends up working with algebraic groups: a number of algorithms for the computation of their defining polynomials are presented in \cite{deGraaf} and in many cases rely on Lie algebra techniques. Given the important role of toric varieties in modern applied mathematics, the results we present are in the language of algebraic geometry.

\subsection{The diagonalizable case}\label{sec:cyclic-diag}

For the convenience of the reader, we collect in the following remark the facts about toric varieties that we will be needing in this section.

\begin{rmk}\label{rmk:toric stuff}
Given a finite set $\AAA=\{\alpha_1,\dots,\alpha_n\}\subset\Z^r$, define the map $\Phi_\AAA:(\C^*)^r \to(\C^*)^n$ by
\begin{align*}
x=(x_1,\dots,x_r)&\mapsto (x^{\alpha_i}=x_1^{\alpha_{i1}}\cdot\ldots\cdot x_r^{\alpha_{ir}}\mid i\in\{1,\dots, n\}).
\end{align*}
The closure of the image of $\Phi_\AAA$ is the toric variety denoted by $Y_\AAA$. The dimension of $Y_\AAA$ is the rank of the free group generated by $\AAA$. In other words, if $A\in\Mat_{r\times n}(\Z)$ is the matrix whose columns are $\alpha_1\dots,\alpha_n$, then $\dim Y_\AAA=\rk A$. Moreover, the ideal of $Y_\AAA$ is generated by the binomials 
$x^\beta-x^\gamma$ whenever $\beta,\gamma\in(\Z_{\ge 0})^r$ satisfy $\beta-\gamma\in\ker_\Z(A)$. 
For these facts and more, see e.g.\ \cite[Section 1.1]{Cox}. 
\end{rmk}

\begin{es}
	\label{ex:toric}
	Let us consider $\AAA=\{(3,-1),(0,1),(1,1)\}.$ Then $\Phi_\AAA:(\C^*)^2\to(\C^*)^3$ is given by
	\begin{align*}
	(x_1,x_2)\mapsto (x_1^{3}x_2^{-1},x_2,x_1x_2).
	\end{align*}
In the notation of Remark \ref{rmk:toric stuff}, we have
	\[
	A=\begin{pmatrix}
	3 & 0& 1\\-1 &1 & 1
	\end{pmatrix}\]
	and so $Y_\AAA$ has dimension $\rk(A)=2$. Since $\ker_\Z(A)=\Z(1,4,-3)$, the toric variety $Y_\AAA$ is defined by the equation $xy^4=z^3$.
\end{es}
Let $M=\diag(a_1,\dots,a_n)\in\GL_n(\C)$. As in Notation \ref{not}, we let $X=\{M^k\mid k\in\Z\}$ be the group generated by $M$ and $G(M)=\langle a_1,\dots,a_n\rangle$ be the subgroup of $\C^*$ generated by the eigenvalues of $M$. 

\begin{pro}	\label{pro:diagonal cyclic torsionfree}
	 If $G(M)$ is torsionfree, then $\overline{X}$ is a toric variety and $\dim\overline{X}=\rk G(M)$.
\end{pro}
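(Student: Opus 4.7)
My plan is to identify $\overline{X}$ directly with a toric variety $Y_\AAA$ in the sense of Remark \ref{rmk:toric stuff}, via a carefully chosen set $\AAA$. Since $G(M)$ is finitely generated abelian and torsion-free of rank $r=\rk G(M)$, I would fix a $\Z$-basis $t_1,\dots,t_r$ of $G(M)$ and write each eigenvalue as $a_i=t_1^{\alpha_{i1}}\cdots t_r^{\alpha_{ir}}$, producing vectors $\alpha_i=(\alpha_{i1},\dots,\alpha_{ir})\in\Z^r$ and a set $\AAA=\{\alpha_1,\dots,\alpha_n\}$. The matrix $A\in\Mat_{r\times n}(\Z)$ with columns $\alpha_i$ has rank $r$, because by construction its columns generate $\Z^r$; by Remark \ref{rmk:toric stuff} this already accounts for the claimed dimension $\dim Y_\AAA=\rk G(M)$.

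Next, I would use that $M$ is diagonal to conclude that $X$ lies in the (Zariski-closed) diagonal subspace of $\Mat_n(\C)$, which I identify with $\C^n$; closures in $\C^{n^2}$ and in $\C^n$ then coincide. Under this identification $M^k$ corresponds to
\[
(a_1^k,\dots,a_n^k)=\Phi_\AAA(t_1^k,\dots,t_r^k),
\]
so $X=\Phi_\AAA(H)$ where $H=\{(t_1^k,\dots,t_r^k)\mid k\in\Z\}\subseteq(\C^*)^r$. The whole argument reduces to the claim that $H$ is Zariski dense in $(\C^*)^r$, for then
\[
\overline{X}=\overline{\Phi_\AAA(H)}=\overline{\Phi_\AAA((\C^*)^r)}=Y_\AAA,
\]
and both assertions follow at once.

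The main obstacle is thus this density statement. My approach is to note that $\overline{H}$ is an algebraic subgroup of $(\C^*)^r$ and hence cut out by a sublattice $\Lambda\subseteq\Z^r$ of characters $\chi_e(y)=y_1^{e_1}\cdots y_r^{e_r}$ that are identically $1$ on $H$. Since $H$ is generated (as a cyclic subgroup) by $(t_1,\dots,t_r)$, such a $\chi_e$ is trivial on $H$ precisely when $t_1^{e_1}\cdots t_r^{e_r}=1$, and the basis property in the torsion-free group $G(M)$ forces $e=0$. Hence $\Lambda=0$ and $\overline{H}=(\C^*)^r$. An alternative, more elementary route is a Vandermonde-type argument applied to any Laurent polynomial vanishing on $H$, using that the scalars $t^e$ are pairwise distinct for distinct $e\in\Z^r$ — again a direct consequence of the multiplicative independence of $t_1,\dots,t_r$.
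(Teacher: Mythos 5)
Your proposal is correct, but it takes a genuinely different route from the paper's. Both arguments begin identically: choose a $\Z$-basis of the free group $G(M)$, record the exponents of the $a_i$ in a matrix $A$, and form the associated toric variety $Y_\AAA$. From there the paper works at the level of ideals, proving $I_X = I_{Y_\AAA}$ by a double inclusion; the nontrivial direction $I_X \subseteq I_{Y_\AAA}$ is outsourced to \cite[Proposition~5]{KZ}, which guarantees that $I_X$ is generated by binomials with coefficients in $\{0,\pm1\}$, so that each generator yields a lattice relation in $\ker_\Z(A)$. You instead argue geometrically: you observe that $X = \Phi_\AAA(H)$ for the cyclic subgroup $H = \langle(t_1,\dots,t_r)\rangle \subseteq (\C^*)^r$, and reduce everything to the density of $H$ in the torus, which you establish either via the classification of closed subgroups of tori by character sublattices, or by the elementary Vandermonde argument exploiting that the map $e\mapsto t^e$ from $\Z^r$ to $\C^*$ is injective (since $t_1,\dots,t_r$ is a basis of the free group $G(M)$). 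This buys you independence from the cited result on binomial ideals and makes the mechanism of the proof more transparent. Your treatment of $\rk A = r$ is also cleaner than the paper's: you note directly that the columns of $A$ generate $\Z^r$ because the $a_i$ generate $G(M)$ and the $t_j$ form a basis, whereas the paper runs a slightly more laborious reordering argument. One small point you should make explicit when writing this up: the passage from $\overline{\Phi_\AAA(H)}$ to $\overline{\Phi_\AAA((\C^*)^r)}$ uses continuity of $\Phi_\AAA$ (so $\Phi_\AAA(\overline{H})\subseteq\overline{\Phi_\AAA(H)}$), which together with $\overline{H}=(\C^*)^r$ gives the desired equality of closures.
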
	 
	 	\begin{proof}
		Set $r=\rk G(M)$. By hypothesis $G(M)$ is a free $\Z$-module of rank $r$. Let $c_1,\dots,c_r$ be a $\Z$-basis of $G(M)$. For every $i\in\{1,\dots,n\}$ and $j\in\{1,\dots,r\}$ there exists $\alpha_{ij}\in\Z$ such that
\begin{align*}
a_1=c_1^{\alpha_{11}}\cdot\ldots\cdot c_r^{\alpha_{1r}},\ldots,
a_n=c_1^{\alpha_{n1}}\cdot\ldots\cdot c_r^{\alpha_{nr}}.
\end{align*}
	We use this data to define the matrix
	\[
	A=\left(
	\begin{matrix}
	\alpha_{11} & \dots &\alpha_{n1}\\
	\vdots&&\vdots\\
	\alpha_{1r}& \dots& \alpha_{nr}
	\end{matrix}\right) \in\Mat_{r\times n}(\Z).\]
Let $\AAA\subset\Z^r$ be the set of lattice points corresponding to the columns of $A$ and let $Y_\AAA$ be the associated toric variety. 
By Remark \ref{rmk:toric stuff}, a set of generators of its ideal $I_{Y_\AAA}$ is given by binomials derived from a generating set of $\ker_\Z(A)$. Observe that every generator of $\ker_\Z(A)$ gives a binomial vanishing on $X$, so $I_{Y_\AAA}\subset I_X$. On the other hand, by \cite[Proposition 5]{KZ}, the ideal $I_X$ is generated by binomials with coefficients in $\graffe{0,\pm 1}$. For this reason, every generator of $I_X$ gives a relation in $G(M)$ and therefore an element of $\ker_\Z(A)$. This shows that $I_X=I_{Y_\AAA}$, so $\overline{X}=Y_\AAA$ is a toric variety. Since $\dim\overline{X}=\rk A$, in order to conclude it suffices to show that $\rk A=r$.

Up to reordering, we assume that the first $t$ columns of $A$ are a basis for the $\Z$-module spanned by all of its columns. Since $A$ has $r$ rows, we clearly have that $t\le r$. On the other hand, for every $j>t$, the $j$-th column $(\alpha_{j1}, \dots,\alpha_{jr})^\top$ is a $\Z$-linear combination of $(\alpha_{11}, \dots,\alpha_{1r})^\top,\dots,(\alpha_{t1}, \dots,\alpha_{tr})^\top$. Hence there exist $\lambda_{1j},\dots,\lambda_{tj}\in\Z$ 
 such that
\begin{align*}
\alpha_{j1}  =\lambda_{1j}\alpha_{11}+\ldots+\lambda_{tj}\alpha_{t1},\dots,
\alpha_{jr}=\lambda_{1j}\alpha_{1r}+\ldots+\lambda_{tj}\alpha_{tr}.
\end{align*}
This means that
\begin{align*}
a_j&=c_1^{\alpha_{j1}}\cdot\ldots\cdot c_r^{\alpha_{jr}}=c_1^{\lambda_{1j}\alpha_{11}+\ldots+\lambda_{tj}\alpha_{t1}}\cdot\ldots\cdot c_r^{\lambda_{1j}\alpha_{1r}+\ldots+\lambda_{tj}\alpha_{tr}}\\
&=c_1^{\lambda_{1j}\alpha_{11}}\cdot\ldots\cdot c_r^{\lambda_{1j}\alpha_{1r}}\cdot\ldots\cdot c_1^{\lambda_{tj}\alpha_{t1}}\cdot\ldots\cdot c_r^{\lambda_{tj}\alpha_{tr}}\\&=a_1^{\lambda_{1j}}\cdot\ldots\cdot a_t^{\lambda_{tj}}.
\end{align*}
Therefore $a_{t+1},\dots,a_n\in\langle a_1,\dots,a_t\rangle$ and so we conclude that $t\ge r$.\end{proof}

The structure of diagonalizable algebraic groups is discussed in \cite[Section 3.9]{deGraaf}. In particular, Proposition 3.9.7 ensures that a diagonalizable algebraic subgroup of $\GL_n(\C)$ splits as a direct product of a finite group and an $r$-dimensional torus, where $r$ is the rank of its associated lattice (in the language of \cite{Cox}, the lattice associated to the toric variety). The arguments we use in the proof of Proposition \ref{pro:diagonal cyclic torsionfree} resemble those from \cite[Section 3.3]{DJK} or \cite[Section 3.9]{deGraaf} though in a slightly different language.

In his PhD Thesis (University of Leipzig, 2020), G\"orlach presents a reformulation of \cite[Proposition 3.9.7]{deGraaf} from the point of view of Hadamard product of algebraic varieties.


\begin{pro}\label{pro:diagonal cyclic} 	
The variety $\overline{X}$ has $|G(M)_{\mathrm{tor}}|$ irreducible components. The components are pairwise isomorphic toric varieties of dimension $\rk G(M)$.
\end{pro}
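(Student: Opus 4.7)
The plan is to reduce to Proposition~\ref{pro:diagonal cyclic torsionfree} by raising $M$ to the power $d := |G(M)_{\mathrm{tor}}|$. I would first observe that the torsion subgroup $T := G(M)_{\mathrm{tor}}$ is a finite subgroup of $\C^*$, hence cyclic of order $d$, so $\gamma^d = 1$ for every $\gamma \in T$. From this it follows that $G(M^d) = \langle a_1^d, \dots, a_n^d\rangle$ is torsion-free of rank $r := \rk G(M)$: indeed if $\gamma \in G(M)$ is such that $\gamma^d$ is torsion in $G(M^d)$, then $\gamma \in T$ and hence $\gamma^d = 1$. Applying Proposition~\ref{pro:diagonal cyclic torsionfree} to $M^d$ then yields that $\overline{\langle M^d\rangle}$ is an irreducible toric variety of dimension $r$.

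Next, I would note that the subgroup $\langle M^d\rangle$ has index exactly $d$ in $X$: this is immediate when $M$ has infinite order (i.e.\ $r > 0$), and when $r = 0$ it follows from the fact that $M$ has order equal to $\lcm_i(\operatorname{ord}(a_i)) = |T| = d$, so that $\langle M^d\rangle$ is trivial. Using this, together with the observation that multiplication by $M^i$ is a linear automorphism of $\Mat_n(\C)$, the coset decomposition $X = \bigsqcup_{i=0}^{d-1} M^i \langle M^d\rangle$ yields, upon taking Zariski closures,
\[
\overline{X} = \bigcup_{i=0}^{d-1} M^i \overline{\langle M^d\rangle}.
\]
This exhibits $\overline{X}$ as a union of $d$ irreducible toric varieties of dimension $r$, pairwise isomorphic through the automorphisms $N \mapsto M^{j-i} N$.

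The main step, and the one I expect to be the principal obstacle, is to prove that these $d$ pieces are pairwise distinct, so that they are exactly the irreducible components of $\overline{X}$. Since they are all irreducible of the same dimension, it suffices to show that $M^k \in \overline{\langle M^d\rangle}$ forces $d \mid k$. I would establish this using the binomial description of $\overline{\langle M^d\rangle}$ from Remark~\ref{rmk:toric stuff}: restricting to the open subset where the diagonal entries are nonzero, a diagonal matrix with entries $(b_1, \dots, b_n)$ lies in $\overline{\langle M^d\rangle}$ if and only if $\prod_i b_i^{u_i} = 1$ for every $u \in \ker(\Z^n \to G(M^d))$, the homomorphism sending $e_i \mapsto a_i^d$. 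Such a vector $u$ is characterised by $(\prod_i a_i^{u_i})^d = 1$, i.e.\ by $\gamma := \prod_i a_i^{u_i} \in T$; and every element of $T \subseteq G(M) = \langle a_1, \dots, a_n\rangle$ arises in this way. Specialising to $b_i = a_i^k$, the condition becomes $\gamma^k = 1$ for all $\gamma \in T$, which, since $T$ is cyclic of order $d$, is equivalent to $d \mid k$. This rules out any collapse among the $M^i \overline{\langle M^d\rangle}$ and completes the argument.
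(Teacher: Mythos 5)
Your proof follows the same route as the paper's: decompose $X$ into the $d=|G(M)_{\mathrm{tor}}|$ cosets $M^i\langle M^d\rangle$, observe that $G(M^d)$ is torsion-free of the same rank as $G(M)$, apply Proposition~\ref{pro:diagonal cyclic torsionfree} to see that $\overline{\langle M^d\rangle}$ is an irreducible toric variety of dimension $\rk G(M)$, and transport this structure to the remaining cosets via the linear automorphisms given by left multiplication by $M^i$. The one genuine difference is your final step: you explicitly check that the $d$ closed sets $M^i\overline{\langle M^d\rangle}$ are pairwise distinct, which the paper's proof does not address and which is needed to conclude $\irr(\overline{X})=d$ rather than merely $\irr(\overline{X})\leq d$. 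Your argument for this is correct: on the torus, membership in $\overline{\langle M^d\rangle}$ amounts to $\prod_i b_i^{u_i}=1$ for every $u\in\Z^n$ in the kernel of $e_i\mapsto a_i^d$, and this kernel is exactly the set of $u$ with $\prod_i a_i^{u_i}\in G(M)_{\mathrm{tor}}$; specialising at $b_i=a_i^k$ forces $\gamma^k=1$ for every $\gamma$ in the cyclic torsion group of order $d$, hence $d\mid k$. This fills a small but real gap in the published proof and is a worthwhile addition; the rest of your argument matches the paper's.
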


\begin{proof}
Set $q=|G(M)_{\mathrm{tor}}|$. For every $i\in\{0,\dots,q-1\}$, define $Y_i=\{M^{kq+i}\mid k\in\Z\}$. Then $X$ is the disjoint union of the $Y_i$'s and
\[
\overline{X}=\overline{Y_0\cup\ldots\cup Y_{q-1}}=\overline{Y_0}\cup\ldots\cup \overline{Y_{q-1}}.
\]
Observe that $Y_i=\{M^i\cdot (M^q)^k\mid k\in\Z\}$ equals the image of $Y_0=\{(M^q)^k\mid k\in\Z\}$ under a linear automorphism of $\Mat_n(\C)$, namely multiplication by $M^i$. Moreover we have
		\[M^q=\diag(a_1^q,\dots,a_n^q).\]
		By construction, the group $\langle a_1^q,\dots,a_n^q\rangle$ is torsionfree of rank equal to $\rk G(M)$. Proposition \ref{pro:diagonal cyclic torsionfree} yields that $\overline{Y_i}$ has dimension $\rk G(M)$ and, being toric, $\overline{Y_i}$ is irreducible.
\end{proof}

We remark  that, in the induced topology, the connected components of $\overline{X} \cap \GL_n(\C)$ are precisely the intersections $\overline{Y_i}\cap \GL_n(\C)$, where $\overline{Y_i}$ is as in the proof of Proposition \ref{pro:diagonal cyclic}. In particular, $\overline{Y_0}\cap \GL_n(\C)$ is the unique irreducible component of $\overline{X}\cap\GL_n(\C)$ that contains the identity matrix.  For more on connectedness, see for example \cite[Section 3.2]{deGraaf}.

With the next example, we would like to hint to how much information toric geometry carries. 
We apply results from \cite[Chapter 2.4]{Cox} to check whether $\ov{X}$ is normal and to compute its singular locus. Moreover, we apply \cite[Theorem 13.4.1]{Cox} to compute the degree of $\ov{X}$. Recall that the \emph{normalized volume} of a polytope $P\subseteq\R^n$, denoted by $\vol P$, is $n!$ times its Lebesgue measure.

\begin{es}\label{es:polytope}
	Let $M=\diag(1,2,3,4)$ and let $X$ be the group generated by $M$. The eigenvalues of $M$ generate $G(M)=\langle 2,3\rangle\cong\Z^2$
	. 
	Following the proof of Proposition \ref{pro:diagonal cyclic torsionfree}, we have
	\[
	A=\begin{pmatrix}
	0 & 1 & 0 & 2\\
	0 & 0 & 1 & 0
	\end{pmatrix}.
	\]
	In this case $\ker_\Z A=\langle(1,0,0,0),(0,2,0,-1)\rangle$. 
Viewed as a subvariety of $\p^3$, the toric variety $\ov{X}$ is defined by $x_1^2=x_0x_3$ and corresponds to the polytope
	\[\begin{tikzpicture}
	\draw (0,0) -- (2,0) -- (0,1) -- cycle;
	\fill[black] (0,0) circle (0.06cm);
	\fill[black] (1,0) circle (0.06cm);
	\fill[black] (2,0) circle (0.06cm);
	\fill[black] (0,1) circle (0.06cm);
	\end{tikzpicture}\]
which we denote by $P$. 
 Such polytope is normal of dimension 2, so the projective variety $\ov{X}$ is a normal surface. However, $P$ is not smooth, so $\ov{X}$ is singular. More precisely, its singular locus is a point. The degree of $\ov{X}$ is $\vol P=2$. As shown in \cite[Example 2.4.6]{Cox}, the variety $\ov{X}$ is the weighted projective space $\p(1,1,2)$ embedded as a quadric cone in $\p^3$.
\end{es} 
The next result shows that we can realize every toric variety as the Zariski closure of a cyclic subgroup of $\GL_n(\C)$.


\begin{pro}
	\label{pro:inverse toric problem}
	Let $Y\subseteq\C^n$ be an affine toric variety and identify $\C^n$ with the space of diagonal matrices. 
	Then there exists a diagonal matrix $M\in\GL_n(\C)$ such that $Y=\ov{\Span{M}}$ in $\C^n$.
	\end{pro}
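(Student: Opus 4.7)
The plan is to reverse-engineer the construction from Section \ref{sec:cyclic-diag}. By Remark \ref{rmk:toric stuff}, I may write $Y = Y_\AAA$ for some finite subset $\AAA = \{\alpha_1, \dots, \alpha_n\} \subset \Z^r$ with $r = \dim Y$, so that $Y$ is the Zariski closure of the image of the monomial map $\Phi_\AAA : (\C^*)^r \to (\C^*)^n$. The idea is to choose a diagonal matrix $M$ whose cyclic trajectory $\{M^k \mid k \in \Z\}$, viewed inside this image, is Zariski-dense in $Y$.

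To this end I would pick elements $c_1, \dots, c_r \in \C^*$ that are multiplicatively independent, which is easily arranged by taking them to be distinct rational primes (unique factorization in $\Z$ rules out any nontrivial monomial relation). Setting $a_i = c_1^{\alpha_{i1}} \cdots c_r^{\alpha_{ir}}$ for $i = 1, \dots, n$ and $M = \diag(a_1, \dots, a_n) \in \GL_n(\C)$, a direct computation shows that $M^k$ equals the diagonal matrix with entries $\Phi_\AAA(c_1^k, \dots, c_r^k)$. Hence $\gen{M} \subseteq \Phi_\AAA((\C^*)^r) \subseteq Y$, and taking closures yields $\ov{\gen{M}} \subseteq Y$.

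For the reverse inclusion I would appeal to Proposition \ref{pro:diagonal cyclic torsionfree}. Multiplicative independence of the $c_j$ ensures that $\gen{c_1, \dots, c_r}$ is free of rank $r$, and under this identification the eigenvalue group $G(M) = \gen{a_1, \dots, a_n}$ corresponds to the $\Z$-span of $\AAA$ inside $\Z^r$; in particular $G(M)$ is torsion-free of rank $\rk A = \dim Y$. Proposition \ref{pro:diagonal cyclic torsionfree} then gives that $\ov{\gen{M}}$ is an irreducible toric variety of dimension $\dim Y$, and since both $\ov{\gen{M}}$ and $Y$ are irreducible of the same dimension with one containing the other, they coincide. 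The step I expect to be most delicate is the identification of $G(M)$ with the lattice spanned by $\AAA$: this is precisely what makes multiplicative independence the right hypothesis on the $c_j$, since a careless choice could introduce torsion or drop the rank of $G(M)$, causing $\ov{\gen{M}}$ to split into several components or to have dimension strictly smaller than $\dim Y$.
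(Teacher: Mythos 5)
Your proof is correct and follows essentially the same route as the paper's: both choose $r$ distinct primes $c_1,\dots,c_r$, set $a_i = c_1^{\alpha_{i1}}\cdots c_r^{\alpha_{ir}}$ and $M=\diag(a_1,\dots,a_n)$, and then invoke the torsion-free diagonal case (Proposition~\ref{pro:diagonal cyclic torsionfree}). The paper simply compresses the verification into ``following the proof of Proposition~\ref{pro:diagonal cyclic torsionfree} backwards,'' whereas you spell out the two inclusions $\ov{\gen{M}}\subseteq Y$ and the dimension-and-irreducibility count explicitly; the mathematical content is the same.
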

	
	\begin{proof}
		Let $\{\alpha_1,\dots,\alpha_n\}\subseteq\Z^r$ be a set of lattice points defining $Y$ as a toric variety. 
		Let
		\[
		A=\left(
		\begin{matrix}
		\alpha_{11} & \dots &\alpha_{n1}\\
		\vdots&&\vdots\\
		\alpha_{1r}& \dots& \alpha_{nr}
		\end{matrix}\right) \in\Mat_{r\times n}(\Z)\]
		be the matrix with columns $\alpha_1,\dots,\alpha_n$. Let $c_1,\dots,c_r$ be $r$ distinct prime numbers and set
		\begin{align*}
		a_1=c_1^{\alpha_{11}}\cdot\ldots\cdot c_r^{\alpha_{1r}},\dots,
		a_n=c_1^{\alpha_{n1}}\cdot\ldots\cdot c_r^{\alpha_{nr}}.
		\end{align*}
By defining $M=\diag(a_1,\dots,a_n)$ and following the proof of Proposition \ref{pro:diagonal cyclic torsionfree}  backwards, we find $Y=\ov{\Span{M}}$.
	\end{proof}

An immediate consequence of Proposition \ref{pro:inverse toric problem} is that we can cook up cyclic matrix groups whose closure has arbitrary dimension, degree and number of irriducible components. However, we observe that, in contrast to the case of toric varieties, not all binomial varieties can be realized as closures of cyclic subgroups of $\GL_n(\C)$. 


The next example shows a way of applying Proposition \ref{pro:diagonal cyclic} in a simple non-cyclic setting.

\begin{es}
	Define $X=\gen{A,B}$ where 
	\[
	A=\begin{pmatrix}
	2 & 0 \\
	0 & 1
	\end{pmatrix} \textup{ and } 
	B=\begin{pmatrix}
	1 & 0 \\
	0 & 2
	\end{pmatrix}. 
	\]
Then, for any $d\in\Z$, the group $X$ contains the cyclic subgroup
	\[
	 Y_d=\gen{AB^d}=\graffe{
		\, \begin{pmatrix}
		2^h & 0 \\
		0 & 2^{hd}
		\end{pmatrix}
		\mid h\in\Z
	}.
	\]
	Thanks to Proposition \ref{pro:diagonal cyclic torsionfree}, the closure of each $Y_d$ is a curve and so, $\overline{X}$ containing infinitely many curves, the dimension of $\overline{X}$ is 2. In particular, $\ov{X}$ is a plane in $\C^4$.
\end{es}

\subsection{The unipotent case}\label{sec:unipotent}

In this section we 
consider the case of unipotent matrices and prove Theorem \ref{thm:cyclic-general}. Let $M\in\GL_n(\C)$ and let $X$ be the subgroup of $\GL_n(\C)$ generated by $M$. 
Without loss of generality, we assume that $M$ is in Jordan normal form. Let $M_s$ and $M_u$ be respectively the semisimple and the unipotent part of $M$, which satisfy $M_sM_u=M_uM_s$. In particular, $M=M_sM_u$ is upper triangular, $M_s$ is diagonal and $M_u$ is upper unitriangular.
We remark that the eigenvalues of $M$ are the same as the eigenvalues of $M_s$. 
We define additionally $X_s=\overline{\{M_s^k \mid k\in\Z\}}$ and $X_u=\overline{\{M_u^k \mid k\in\Z\}}$.

The proof of the following result is an easy computation.


\begin{lem}\label{lem:jordanblocks}
Let $\lambda\in\C^*$, $k\in\Z_{\geq 0}$, and let $J(m,\lambda)=(b_{ij})\in\GL_m(\C)$ be defined by
\[
b_{ij}=\begin{cases}
1 & \textup{if } i=j, \\
\lambda & \textup{if } j=i+1, \\
0 & \textup{otherwise}.
\end{cases}
\]
Write $J(m,\lambda)^k=(a_{ij})$. Then 
\begin{equation}\label{eq:aij}
a_{ij}=\begin{cases}
0 & \textup{if } i>j, \\
\binom{k}{j-i}\lambda^{j-i} & \textup{otherwise}
\end{cases}
\end{equation}
and, for each $r\in\graffe{1,\ldots, m-1}$, the following holds:
\begin{equation}\label{eq:factorial}
r! a_{1,r+1}=\prod_{i=0}^{r-1}(a_{12}-i\lambda).
\end{equation}
\end{lem}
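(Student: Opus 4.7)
The plan is to recognize $J(m,\lambda)$ as $I_m+\lambda N$, where $N=(n_{ij})\in\Mat_m(\C)$ is the nilpotent shift matrix with $n_{ij}=1$ if $j=i+1$ and $n_{ij}=0$ otherwise. Since $I_m$ and $\lambda N$ commute, the binomial theorem yields
\[
J(m,\lambda)^k = (I_m+\lambda N)^k = \sum_{s=0}^{k}\binom{k}{s}\lambda^s N^s.
\]
A routine check shows that, for each $s\geq 0$, the matrix $N^s$ has $(i,j)$-entry equal to $1$ if $j-i=s$ and $0$ otherwise (in particular $N^s=0$ for $s\geq m$, consistently with the convention $\binom{k}{s}=0$ whenever $s>k$ in case $k<m-1$). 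Reading off the $(i,j)$-entry of the displayed sum then gives formula \eqref{eq:aij}: every contribution vanishes when $i>j$, and for $i\leq j$ only the term $s=j-i$ survives, producing $\binom{k}{j-i}\lambda^{j-i}$.

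For the identity \eqref{eq:factorial}, I would simply plug in the values supplied by \eqref{eq:aij}. Writing $a_{12}=k\lambda$ and $a_{1,r+1}=\binom{k}{r}\lambda^{r}$, we compute
\[
r!\,a_{1,r+1} \;=\; r!\binom{k}{r}\lambda^{r} \;=\; k(k-1)\cdots(k-r+1)\,\lambda^{r} \;=\; \prod_{i=0}^{r-1}(k-i)\,\lambda^{r},
\]
while on the other side
\[
\prod_{i=0}^{r-1}(a_{12}-i\lambda) \;=\; \prod_{i=0}^{r-1}(k\lambda-i\lambda) \;=\; \lambda^{r}\prod_{i=0}^{r-1}(k-i).
\]
The two expressions agree, which concludes the proof.

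There is no serious obstacle here; as the authors already note, the entire statement is an easy computation. The only point that deserves a little care is the convention on binomial coefficients when $k<r$ (in which case both sides of \eqref{eq:factorial} vanish), and the observation that $N$ is $m$-step nilpotent so that the binomial expansion truncates correctly in $\Mat_m(\C)$.
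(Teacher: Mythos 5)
Your proof is correct and follows the natural route (decomposing $J(m,\lambda)=I_m+\lambda N$ with $N$ nilpotent, applying the binomial theorem, and then substituting into \eqref{eq:factorial}); the paper omits the argument entirely, stating only that it is ``an easy computation,'' so there is nothing to diverge from. Your remark about the binomial-coefficient convention when $k<r$ and the truncation of the expansion for $s\geq m$ is a sensible piece of bookkeeping that makes the claimed identities hold uniformly for all $k\in\Z_{\geq 0}$.
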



\begin{lem}\label{lem:unipotent}
Assume that $M_u\neq 1$ and let $m$ be the biggest size of a Jordan block of $M$. Then $X_u$ is a degree $m-1$ rational normal curve. 
\end{lem}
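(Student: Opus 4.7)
The plan is to leverage Lemma \ref{lem:jordanblocks} to upgrade the discrete exponentiation $k\mapsto M_u^k$ to a polynomial morphism $\A^1\to\Mat_n(\C)$. Since $M$ is already in Jordan normal form, $M_u$ is block-diagonal and each of its blocks has the shape $J(m_l,\mu_l)$ from Lemma \ref{lem:jordanblocks}, with $\mu_l=\lambda_l^{-1}$ the inverse of the eigenvalue of $M$ on that block; choose once and for all a biggest block of size $m$ with parameter $\mu\in\C^*$. I extend $k\mapsto M_u^k$ to a morphism $\phi\colon\A^1\to\Mat_n(\C)$ by replacing every occurrence of $\binom{k}{r}$ in formula \eqref{eq:aij} with the polynomial $\binom{t}{r}=t(t-1)\cdots(t-r+1)/r!\in\C[t]$. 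Since $\Z$ is Zariski dense in $\A^1$, we get $X_u=\overline{\phi(\A^1)}$.

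Next, I show that $\phi$ is a closed immersion. The coordinate of $\Mat_n(\C)$ corresponding to the $(1,2)$-entry of the distinguished biggest block pulls back under $\phi$ to the non-constant linear polynomial $\mu t$. Using it to solve for $t$, every other coordinate of $\phi(t)$ becomes a polynomial in this one coordinate. Hence $X_u=\phi(\A^1)$ is a closed affine rational curve.

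To identify $X_u$ with a rational normal curve of degree $m-1$, I project onto the $m$ entries of $\phi(t)$ coming from the top row of the largest block. By Lemma \ref{lem:jordanblocks}, these are
\[
1,\ \mu t,\ \binom{t}{2}\mu^2,\ \ldots,\ \binom{t}{m-1}\mu^{m-1},
\]
polynomials of distinct degrees $0,1,\dots,m-1$, which therefore form a basis of $\C[t]_{\le m-1}$. After a linear change of coordinates, explicitly provided by equation \eqref{eq:factorial}, this projection parametrizes $X_u$ as the classical rational normal curve $t\mapsto(1,t,t^2,\dots,t^{m-1})$, hence a nondegenerate rational curve of degree $m-1$ inside an $m$-dimensional linear subspace. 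To complete the picture, I note that any other entry of $\phi(t)$ lies in a Jordan block of size at most $m$, so by \eqref{eq:aij} it is a polynomial in $t$ of degree at most $m-1$, hence a $\C$-linear combination of the chosen basis. Thus $X_u$ is contained in an affine subspace of $\Mat_n(\C)$ of dimension $m$ and realizes a rational normal curve of degree $m-1$ there.

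The main obstacle is purely bookkeeping: one must verify that the chosen $m$ coordinates actually realize $X_u$ as a rational normal curve (and not merely as an abstract rational curve of that degree), which amounts to confirming nondegeneracy in the $m$-dimensional span. The linear-independence argument via distinct degrees, together with the explicit identity \eqref{eq:factorial}, packages this cleanly and is the key step linking Lemma \ref{lem:jordanblocks} to the geometric conclusion.
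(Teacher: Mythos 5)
Your proof takes essentially the same route as the paper's: extend the exponentiation to a polynomial morphism $\phi:\A^1\to\Mat_n(\C)$ using Lemma~\ref{lem:jordanblocks}, observe that every entry of $\phi(t)$ is a polynomial of degree at most $m-1$ in $t$ so $X_u$ lies in an $m$-dimensional affine subspace, and then turn the first-row entries of the largest block into the standard rational normal curve parametrization by a linear change of coordinates. The one place where you diverge from the paper's presentation is in justifying that linear change: the paper builds it recursively ($t^r$ is obtained from $\prod_{j=0}^{r-1}(t-j)$ by subtracting lower-order terms that are already handled by the inductive hypothesis), whereas you invoke the slicker fact that polynomials of distinct degrees $0,\dots,m-1$ form a basis of $\C[t]_{\le m-1}$. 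Both are correct; yours is more concise, the paper's is more explicit. You also make the closedness of $\phi(\A^1)$ explicit by exhibiting a linear left inverse (the scaled $(1,2)$-entry), which is a useful clarification of the paper's briefer argument that the image is an irreducible curve containing the infinite set $X_u$. One small inaccuracy: equation~\eqref{eq:factorial} gives a \emph{polynomial} (degree $r$), not a linear, relation between $a_{12}$ and $a_{1,r+1}$, so it does not literally ``provide'' the linear change of coordinates; but your distinct-degrees argument already supplies it, so this does not affect the validity of the proof.
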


\begin{proof}
	Let $d$ denote the number of Jordan blocks of $M$, arbitrarily ordered. For each $l\in\{1,\dots,d\}$, let $\lambda_l$ and $m(l)$ denote respectively the eigenvalue and size corresponding to the $l$-th Jordan block of $M$.
Set $J_l=J(m(l),\lambda_l^{-1})$ so that, for every $k\in\Z$, we have
$M_u^k=\diag(J_1^k,\dots,J_d^k)$.
Fix now $k\in\Z$ and write $a_{l, ij}$ for the $(i,j)$-th entry of $J_l^k$. By Lemma \ref{lem:jordanblocks}(1), all entries of $J_l^k$ are linear functions of entries in the first row of $J_l^k$ and thus, by Lemma \ref{lem:jordanblocks}(2), polynomials in $a_{l,12}$. Furthermore, by Lemma \ref{lem:jordanblocks}(1), the blocks $J_l^k$ and $J_s^k$ are compared via
	\[a_{l,12}=k\lambda_l^{-1}=\frac{\lambda_s}{\lambda_l}\cdot k\lambda_s^{-1}=\frac{\lambda_s}{\lambda_l}\cdot a_{s,12}.\]
Fix $J\in\graffe{J_1,\ldots,J_d}$ to be an element of maximal size $m$.
	Then $X_u$ is contained in a linear space $L$ of dimension $m$, with coordinates $x_1,\dots,x_m$ corresponding to the entries of the first row of $J$. More precisely, $X_u$ is contained in the image of the map $\tilde{f}:\C\to L$ defined by
	\begin{align*}
	t\mapsto \left( 1,t\lambda^{-1},\frac{t(t-1)}{2}\lambda^{-2},\dots,\frac{1}{(m-1)!}\prod_{j=0}^{m-2}(t-j)\lambda^{-m+1}\right).
	\end{align*}
	Since the image of $\tilde{f}$ is an irreducible curve and $X_u$ is infinite, $\tilde{f}(\C)=X_u$. After applying the first linear change of coordinates
	\[(x_1,x_2,x_3,\dots,x_m)\mapsto (x_1,\lambda x_2,2\lambda^2x_3,\dots,(m-1)!\lambda^{m-1}x_m),
	\] $X_u$ is parametrized by the map $f:\C\to L$ defined by
\[	t\mapsto\left( 1,t,t(t-1),\dots,\prod_{j=0}^{m-2}(t-j)\right).\]
To show that $X_u$ is a degree $m-1$ rational normal curve, 
we recursively construct linear polynomials $l_1(x_1),l_2(x_1,x_2),\ldots,l_m(x_1,\dots,x_m)$ such that, for each $r\in\{1,\dots,m\}$,  the map $\phi_r:\C^m\to\C^m$ defined by
\[\phi_r(x_1,\dots,x_m)=(l_1(x_1),\dots,l_r(x_1,\dots,x_r),x_{r+1},\dots,x_m)\]
has the property that the first $r$ entries of $f\circ\phi_r(x_1,\dots,x_m)$ equal $(1,t,t^2,\dots,t^{r-1})$. 
Set $l_1(x_1)=x_1$ and $l_2(x_1,x_2)=x_2$. Assume now that $l_1\dots,l_r$ are given and let us define $l_{r+1}$. By the induction hypothesis, the change of coordinates
$(x_1,\dots,x_m)\mapsto (l_1(x_1),\dots,l_r(x_1,\dots,x_r),x_{r+1},\dots,x_m)$
turns $f$ into
\[t\mapsto \left( 1,t,t^2,\dots,t^{r-1},\prod_{j=0}^{r-1}(t-j),\dots,\prod_{j=0}^{m-2}(t-j)\right).
\]
Now the $(r+1)$-th entry is of the form $t^r+c_{r-1}t^{r-1}+\ldots+c_1t+c_0$ for some $c_0,\ldots,c_{r-1}\in\C$. We conclude by defining \[l_{r+1}(x_1,\ldots,x_{r+1})=x_{r+1}-c_{r-1}x_r-\ldots-c_1x_2-c_0x_1, \]
which is linear in $x_1,\ldots, x_{r+1}$ and satisfies by construction the required inductive property.
\end{proof}

Lemma \ref{lem:unipotent} is a different instance of \cite[Proposition 4.3.10]{deGraaf} for algebraic subgroups of $\GL_n(\C)$, though our proof does not rely on Lie theory. Moreover, as a consequence of \cite[Corollary 4.3.11]{deGraaf}, the Zariski closure of any unipotent subgroup is (connected and thus) irreducible in $\Mat_n(\C)$. For more about unipotent algebraic groups in this context, see for example \cite[Section 4.3.2]{deGraaf}.


\begin{es}\label{es:twisted cubic via unitriangular}
We use the notation from Lemma \ref{lem:jordanblocks}. Define
\[
M=
\begin{pmatrix}
 1/5 & 1 & 0 & 0 \\
 0 & 1/5 & 1 & 0 \\
 0 & 0 & 1/5 & 1 \\
 0 & 0 & 0 & 1/5
\end{pmatrix},
\]
which is already in Jordan normal form. In this case 
\[
M_s=\diag(
1/5 , 1/5,1/5,1/5) \textup{ and }
M_u=J(4,5).
\]
By Lemma \ref{lem:jordanblocks}(1), for each $k\in\Z$ one has 
\[
M_u^k =J(4,5)^k
=
\begin{pmatrix}
 1 & 5k & 5^2\cdot\frac{k(k-1)}{2} & 5^3\cdot\frac{k(k-1)(k-2)}{6} \\
 0 & 1 & 5k & 5^2\cdot\frac{k(k-1)}{2} \\
 0 & 0 & 1 & 5k \\
 0 & 0 & 0 & 1
\end{pmatrix}.
\]
Denoting by $x_{ij}$ the $16$ independent variables corresponding to the entries of a matrix in $\Mat_4(\C)$, we see that $X_u$ is contained in the $4$-dimensional linear space $L$ defined by the equations
\begin{align*}
x_{ij}=0\textup{ for } i<j, & \quad x_{13}=x_{24},\\
x_{ii}=1\textup{ for }i\in\graffe{1,\ldots, 4}, & \quad  
x_{12}=x_{23}=x_{34}.
\end{align*}
We identify $L$ with the affine space $\C^4$, with coordinates $x_1,x_2,x_3,x_4$ corresponding to the entries $x_{11},x_{12},x_{13},x_{14}$ of the first row of $M_u^k$. Then $X_u$ is the image of the map $\C\to L$ defined by $$t\mapsto \left( 1,5t,\frac{25t(t-1)}{2},\frac{125t(t-1)(t-2)}{6}\right).$$ After the changes of coordinates
\[(x_1,x_2,x_3,x_4)\mapsto \left( x_1,\frac{x_2}{5},\frac{2x_3}{25}+\frac{x_2}{5},\frac{6x_4}{125}+\frac{6x_3}{25}+\frac{x_2}{5}\right)
,\]
we see that $X_u$ is the image of $t\mapsto (1,t,t^2,t^3)$, so $X_u$ is the twisted cubic curve in the hyperplane defined by $x_1=1$ in $L$.
\end{es}

\begin{pro}\label{pro:sum dims} 
The following equalities hold:
\[
\dim\overline{X}=\dim X_s + \dim X_u  \textup{ and } 
\irr(\overline{X})=\irr(X_s).
\] 
\end{pro}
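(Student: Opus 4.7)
The plan is to decompose the cyclic semigroup $X=\langle M\rangle$ into $q:=|G(M)_{\mathrm{tor}}|$ cosets of the subsemigroup generated by $M^q$, whose eigenvalue group is torsionfree of rank $r:=\rk G(M)$. After this reduction, the torsionfree case can be handled by a single polynomial parametrization with source $(\C^*)^r\times\C$, and the pairwise distinctness of the cosets can be read off from the component structure of $X_s$ already established in Proposition \ref{pro:diagonal cyclic}.

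Setting $N:=M^q$ and $Z_i:=M^i\langle N\rangle$ for $0\le i<q$, I would write $X=\bigsqcup_i Z_i$ and observe that each $\overline{Z_i}$ is the image of $\overline{\langle N\rangle}$ under the linear automorphism $A\mapsto M^i A$ of $\Mat_n(\C)$. Hence all $\overline{Z_i}$ are pairwise isomorphic as algebraic varieties, so it suffices to establish irreducibility, dimension, and mutual distinctness. Note that $N_u=M_u^q$ equals $I$ exactly when $M_u=I$ (in characteristic zero a nontrivial unipotent has infinite order), so $\dim X_u(N)=\dim X_u$ throughout.

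For the torsionfree case, I would pick a $\Z$-basis $c_1,\dots,c_r$ of $G(N)$ and use Lemma \ref{lem:jordanblocks} to define a morphism $\Phi:(\C^*)^r\times\C\to\Mat_n(\C)$ whose $(i,i+s)$-entry in the $l$-th Jordan block is $\binom{t}{s}\lambda_l^{-s}\prod_j x_j^{\alpha_{l,j}}$, so that $\Phi(c_1^k,\dots,c_r^k,k)=N^k$ for every $k\in\Z$. Using the linear independence of exponential-polynomial sequences $\{P(k)\,C^k\}$ over distinct $C\in\C^*$, I would argue that $\{(c_1^k,\dots,c_r^k,k)\mid k\in\Z\}$ is Zariski-dense in $(\C^*)^r\times\C$, whence $\overline{\langle N\rangle}=\overline{\Phi((\C^*)^r\times\C)}$ is irreducible (as the closure of the image of an irreducible variety). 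To compute the dimension, I would analyze the generic fiber of $\Phi$: when $N$ is diagonalizable, $\Phi$ is independent of $t$ and one recovers $\dim\overline{\langle N\rangle}=r$; when $N_u\neq I$, the diagonal entries recover $x$ up to the finite kernel of the toric surjection $(\C^*)^r\to X_s$, and then any off-diagonal entry in a nontrivial block recovers $t$, so the fiber is zero-dimensional and the dimension is $r+1$.

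To finish, I would apply the diagonal projection $\pi:\Mat_n(\C)\to\C^n$ to separate the cosets: since $\pi(Z_i)=\{M_s^{qk+i}\mid k\in\Z\}$ is Zariski-dense in the $i$-th component $Y_i$ of $X_s$, and the $Y_i$ are pairwise distinct by Proposition \ref{pro:diagonal cyclic}, so are the $\overline{Z_i}$. Being irreducible of the same dimension, no inclusion can hold between them, so they form the irreducible components of $\overline{X}$, yielding $\irr(\overline{X})=q=\irr(X_s)$ and $\dim\overline{X}=r+\dim X_u=\dim X_s+\dim X_u$. The step I expect to be most delicate is the Zariski-density claim in paragraph three: without the multiplicative independence of $c_1,\dots,c_r$ the exponential-polynomial argument fails, which is precisely why replacing $M$ by $N=M^q$ before parametrizing is essential.
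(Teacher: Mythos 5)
Your proof is correct, but it takes a genuinely different route from the paper's. The paper's argument is short and structural: after passing to the closure in $\GL_n(\C)$ (Remark~\ref{rmk:dim-irr}), it observes that $\ov X$ is a commutative algebraic group and invokes the Jordan--Chevalley structure theorem for such groups (Humphreys, Theorem 15.5) to obtain an isomorphism of algebraic groups $\ov X\cong X_s\times X_u$, from which both equalities fall out immediately, using irreducibility of $X_u$ from Lemma~\ref{lem:unipotent}. You bypass this structure theory entirely and instead build an explicit polynomial parametrization $\Phi\colon(\C^*)^r\times\C\to\Mat_n(\C)$ of the torsionfree piece $\ov{\Span{M^q}}$, prove Zariski density of $\{(c_1^k,\dots,c_r^k,k)\}$ via linear independence of exponential-polynomial sequences, read off the dimension from the generic fiber, and separate the $q$ cosets by the diagonal projection onto $X_s$. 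Your argument is more elementary and self-contained, and it makes the toric structure of each component visible from the start; but it is longer, and in effect re-proves Proposition~\ref{pro:diagonal cyclic} and Lemma~\ref{lem:unipotent} on the way to the two requested equalities, whereas the paper's proof factors the work through those two auxiliary results and one citation. Two points worth flagging in your write-up: the finiteness of the kernel of the toric map $(\C^*)^r\to X_s$ (equivalently, that the exponent matrix has full rank $r$) is used implicitly and deserves a word, though it is exactly the computation in the proof of Proposition~\ref{pro:diagonal cyclic torsionfree}; and the pairwise distinctness of the $\ov{Z_i}$ should be stated as a consequence of Proposition~\ref{pro:diagonal cyclic} returning exactly $q$ components for $X_s$, which forces the $Y_i$ to be distinct.
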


\begin{proof}
By Remark \ref{rmk:dim-irr}, the dimension and the number of irreducible components of $\overline{X}$ remain invariant when intersecting $\overline{X}$ with $\GL_n(\C)$. For \emph{this proof only}, we will write $\overline{X}$ to mean the Zariski closure of $X$ in $\GL_n(\C)$. This applies also to $X_s$ and $X_u$. Recall that $\overline{X}$, $X_s$, and $X_u$ are in this case subgroups of $\GL_n(\C)$, see for example \cite[Lemma 2.2.4]{Springer}.

We start by observing that $\ov{X}$ is abelian. 
Indeed, the commutator map $\ov{X}\times \ov{X}\rightarrow \GL_n(\C)$ is continuous and trivial on the dense subset $X\times X$, therefore it is itself trivial.
Now, the group $\ov{X}$ being abelian, \cite[Theorem 15.5]{Humphreys} yields that $\overline{X}\cong X_s\times X_u$
. In particular, we get $\dim\overline{X}=\dim X_s+\dim X_u$. Lemma \ref{lem:unipotent} ensures that $X_u$ is irreducible and thus we also have that $\irr(\overline{X})=\irr(X_s)$.
\end{proof}

We prove here Theorem \ref{thm:cyclic-general}. 
From Proposition \ref{pro:sum dims} we know that $\dim\overline{X}=\dim X_s+\dim X_u$ and $\irr(\overline{X})=\irr(X_s)$. By Proposition \ref{pro:diagonal cyclic}, we have $\irr(\overline{X})=|G(M)_{\mathrm{tor}}|$ and, combined with Lemma \ref{lem:unipotent}, that 
$$\dim\overline{X}=\rk G(M) +\dim X_u
=\begin{cases}
\rk G(M) & \textup{if } M_u=1,\\
\rk G(M) +1 & \textup{otherwise.}
\end{cases}
$$
In conclusion, as a consequence of Proposition \ref{pro:diagonal cyclic} and the fact that $\overline{X}\cap \GL_n(\C)$ is dense in $\overline{X}$, the irreducible components of $\overline{X}$ are toric varieties.


\begin{cor}\label{cor:potenze hanno la stessa chiusura} 
Let $q\in\Z$. If $G(M)$ is torsionfree, then $\ov{X}=\ov{\Span{M^q}}$.
\end{cor}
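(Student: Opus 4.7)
The plan is to establish two inclusions, with the nontrivial one resting on irreducibility together with equality of dimensions. (I read $q\in\Z$ as $q\neq 0$, since otherwise $M^q=1$ and $\ov{\Span{M^q}}=\{1\}$, which fails to equal $\ov X$ unless $M=1$.) First, since $\Span{M^q}\subseteq\Span{M}=X$, I immediately get $\ov{\Span{M^q}}\subseteq\ov X$. By Theorem \ref{thm:cyclic-general}, the assumption $G(M)_{\mathrm{tor}}=0$ forces $\irr(\ov X)=1$, so $\ov X$ is irreducible. It will therefore suffice to show that $\ov{\Span{M^q}}$ is also irreducible and has the same dimension as $\ov X$; then an irreducible closed subset of equal dimension of an irreducible variety must coincide with it.

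Next, I would compare $G(M^q)$ with $G(M)$. The nonzero eigenvalues of $M^q$ are exactly the $q$-th powers of those of $M$, so $G(M^q)$ is the image of $G(M)$ under the homomorphism $x\mapsto x^q$. Because $G(M)$ is torsionfree and $q\neq 0$, this map is injective, so $G(M^q)\cong G(M)$; in particular $G(M^q)$ is torsionfree and $\rk G(M^q)=\rk G(M)$. Theorem \ref{thm:cyclic-general} applied to $M^q$ then gives $\irr(\ov{\Span{M^q}})=1$, so $\ov{\Span{M^q}}$ is irreducible, and its dimension equals $\rk G(M)$ or $\rk G(M)+1$ depending on whether $M^q$ is diagonalizable.

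Finally, I would check that $M^q$ is diagonalizable if and only if $M$ is, so that the two dimensions provided by Theorem \ref{thm:cyclic-general} agree. Writing $M=M_sM_u$ for the Jordan decomposition, $M^q=M_s^qM_u^q$ with $M_s^q$ semisimple, $M_u^q$ unipotent, and the two factors commuting; this is the Jordan decomposition of $M^q$, and hence $M^q$ is semisimple iff $M_u^q=1$. The main (mild) obstacle is the implication $M_u^q=1\Rightarrow M_u=1$, which follows because a nontrivial unipotent matrix in characteristic zero has infinite order; concretely, by the binomial identity \eqref{eq:aij} in Lemma \ref{lem:jordanblocks}, a Jordan block $J(m,0)$ with $m\geq 2$ satisfies $J(m,0)^q\neq 1$ for every $q\neq 0$. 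Combining this equivalence with the rank computation of the previous step yields $\dim\ov{\Span{M^q}}=\dim\ov X$, and the proof is complete.
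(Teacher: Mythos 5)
Your proof is correct and takes the same route as the paper's: both observe that $\ov{\Span{M^q}}\subseteq\ov{\Span{M}}$ and that Theorem \ref{thm:cyclic-general} makes both closures irreducible of the same dimension, since $G(M^q)$ is torsionfree of the same rank as $G(M)$. You additionally spell out two points the paper leaves implicit: that $q=0$ must be excluded, and that $M^q$ is diagonalizable if and only if $M$ is (which is needed so that the two applications of Theorem \ref{thm:cyclic-general} produce the same dimension formula); the latter verification via the Jordan decomposition $M^q=M_s^qM_u^q$ and the infinite order of nontrivial unipotents in characteristic zero is exactly right. One small notational slip: in the paper's Lemma \ref{lem:jordanblocks} the unipotent block is $J(m,\lambda)$ with $\lambda\in\C^*$, so you should cite $J(m,\lambda)$ rather than $J(m,0)$ (which would be the identity matrix), but this does not affect the substance of your argument.
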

	\begin{proof}
		Let $a_1,\dots,a_n$ be the eigenvalues of $M$ and assume that $G(M)$ is torsionfree. Then the eigenvalues of $M^q$ are $a_1^q,\dots,a_n^q$ and $\langle a_1^q,\dots,a_n^q\rangle$ is a free $\Z$-submodule of $G(M)$ of the same rank as $G(M)$. By Theorem \ref{thm:cyclic-general}, the varieties $\ov{\Span{M}}$ and $\ov{\Span{M^q}}$ are both irreducible of the same dimension. Since $\ov{\Span{M}}\supseteq\ov{\Span{M^q}}$, they are the same.
	\end{proof}

\section{Zariski closure of a cyclic semigroup}\label{sec:semigroups}
The purpose of this section is to prove Theorem \ref{thm:semigroups}. We start with an example to illustrate the argument we will use in the proof.

\begin{es}
Let $M\in\Mat_n(\C)$ be defined by
\[
M=\begin{pmatrix}
0 & 1 & 0 \\
0 & 0 & 0 \\
0 & 0 & 2
\end{pmatrix}
\]
and let $X=\{M^k \mid k\in\Z_{>0}\}$. Then $M^2=\diag(0,0,4)$ and thus we have 
\[
X=\graffe{M}\ \dot{\cup}\  \{\diag(0,0,2^k) \mid k\geq 2\}. 
\]
We observe that the set $\{\diag(0,0,2^k) \mid k\geq 2\}$ consists of infinitely many collinear points. In particular, we get that 
\[
\overline{X}=\graffe{M}\ \dot{\cup}\  \graffe{\diag(0,0,z) \mid z\in \C} 
\]
and so $X$ is the disjoint union of a point and a line.
\end{es}

Until the end of this section, we will work under the hypotheses of Theorem \ref{thm:semigroups}. We proceed by considering disjoint cases. 

Assume first that $\mc{E}(M)=\vu $.
In this case the only eigenvalue of $M$ is $0$, which implies that $M$ is nilpotent
. 
Since $M\neq 0$ by hypothesis, $M^{\nu}$ is the smallest power of $M$ that is equal to $0$ and so $\overline{X}$ consists of $\nu$ points. To conclude, define $X_0=\overline{X}$ and $X_1=\vu $.

Assume now that $M$ is invertible, so $\nu=0$ and $\mc{E}(M)\neq\vu $. Define $X_0=\vu $ and $X_1=\overline{X}$. We are now done thanks to Theorem \ref{thm:cyclic-general}.

To conclude, assume that $M$ is not invertible and $\mc{E}(M)\neq\vu $. In this case, $\nu\geq 1$ and there exist positive integers $m$ and $p$ and matrices $N\in\Mat_m(\C)$ strictly upper triangular and $M_1\in\GL_p(\C)$ upper triangular such that $M$ has the following block shape:
\[
M=\begin{pmatrix}
N & 0 \\
0 & M_1
\end{pmatrix}.
\] 
Fix such matrices $N$ and $M_1$. Then $N$ is nilpotent and $\nu$ is the smallest exponent annihilating $N$. It follows that
\[
X = \{M^k \mid k\in\{ 1,\ldots,\nu-1\}\}\ \dot{\cup}\  \graffe{\begin{pmatrix}
0 & 0 \\
0 & M_1^k
\end{pmatrix} \mid k\geq \nu }.
\]
Write $X_0=\{M^k \mid k\in\{ 1,\ldots,\nu-1\}\}$ and 
$$Y_1=\graffe{\begin{pmatrix}
0 & 0 \\
0 & M_1^k
\end{pmatrix} \mid k\geq \nu }.$$
Then $X_0$ is a closed variety consisting of $\nu-1$ points. Set $X_1=\overline{Y_1}$. We observe that the semigroup generated by $M_1$ in $\GL_p(\C)$ is the image of $Y_1$ under a linear automorphism of $\Mat_n(\C)$. It follows from Proposition \ref{pro:semi vs gp} that $X_1$ is isomorphic to the Zariski closure of $\gen{M_1}$ in $\Mat_p(\C)$. Thanks to Theorem \ref{thm:cyclic-general}, the proof of Theorem \ref{thm:semigroups} is now complete.

\section{Computation of closures of matrix groups}
We conclude the paper with a sinthetic discussion of the available  algorithms for the computation of Zariski closures of matrix groups.

We start by remarking that, while $\C$ is the most convenient field choice for algebraic geometry, this is certainly not the case for computer algebra softwares.  For computational purposes, it is indeed necessary to work over a field that is suitable for symbolic computations, for instance the field of rational numbers. 

Ideally, one wishes for an algorithm that takes as input a
list of matrices $M_1,\dots,M_t\in\Mat_n(\mathbb{Q})$ and returns as output the ideal of the Zariski closure of the group or semigroup generated by $M_1,\dots,M_t$. Such an algorithm would provide the strongest polynomial invariants of $\gen{M_1,\ldots, M_t}$.


When all the matrices are invertible, it makes sense to consider the group they generate: algorithms computing the closure of such group in $\GL_n(\C)$ are presented in \cite[Section 3]{DJK} and in \cite[Chapter 4.6]{deGraaf}. The computation of the closure in $\Mat_n(\C)$ or $\Mat_n(\R)$ of the generated semigroup is addressed in \cite{HOPW/18}.  For a number of related problems, see for example \cite{Babaibeals, JKK, KZ, OuWo, SWZ}.

Some of these results concern \textit{decidability}, i.e.\ the existence of an algorithmic solution. Among the implementations we mention \cite[Algorithm 3]{KZ}, implemented in \textsc{ Mathematica 5} \cite{M5}, and various algorithms presented in \cite{deGraaf} and implemented in \textsc{GAP4} \cite{gap4}, \textsc{Magma} \cite{magma}, and \textsc{Singular} \cite{singular}. 

To the best of our knowledge, no complexity analysis has been run in \cite{DJK, HOPW/18, KZ}.  In the preface to \cite{deGraaf}, the author writes: \textquotedblleft We do not consider the complexity of algorithms as they very often are bad.
Indeed, quite a few algorithms use Gr\"obner bases, and the complexity of the
algorithms to compute the latter is known to be doubly exponential\textquotedblright. It is however worth mentioning that  many of these algorithms rely on a polynomial-time algorithm of Ge \cite[Theorem 1.1]{Ge}, dealing with units in number fields.  The last result is generalized in \cite[Theorem 1.11]{Lenstra} to arbitrary $\mathbb{Q}$-algebras.

\end{document}